\documentclass[reqno, 12pt, a4letter]{amsart}
\usepackage{amsmath,amsxtra,amssymb,latexsym,amscd,amsthm}
\usepackage[mathscr]{euscript}
\usepackage{mathrsfs}
\usepackage[english]{babel}
\usepackage[active]{srcltx}
\usepackage{enumerate}
\usepackage{color}
\usepackage{mathtools,stmaryrd}
\usepackage[T1]{fontenc}
\usepackage{tikz}
\usepackage{tikz-cd}
\usepackage[utf8]{inputenc}
%\usetikzlibrary{calc}
%\usetikzlibrary{math}
%\usepackage[lite]{mtpro2}

\setlength{\parindent}{25pt}
\setlength{\parskip}{1.0pt}
\setlength{\oddsidemargin}{-.1cm}
\setlength{\evensidemargin}{-.1cm}
\setlength{\textwidth}{6.5in}
\setlength{\textheight}{9.3in}
\setlength{\headheight}{0in}
\setlength{\topmargin}{-1.5cm}
\setlength{\headsep}{1.25cm}
\setlength{\footskip}{.5in}
\setlength{\baselineskip}{15pt}

\newtheorem*{MainTheorem}{Main Theorem}
\newtheorem {theorem}{Theorem}[section]

\newtheorem {corollary}[theorem]{Corollary}
\newtheorem {lemma}[theorem]{Lemma}

\newtheorem {definition}[theorem]{Definition}

\newtheorem {proposition}[theorem]{Proposition}

\newcommand{\Int}{{\rm int}}

%=============================================================================
\def\ees{{\accent"5E e}\kern-.385em\raise.2ex\hbox{\char'23}\kern-.08em}
\def\EES{{\accent"5E e}\kern-.5em\raise.8ex\hbox{\char'23 }}
\def\ow{o\kern-.42em\raise.82ex\hbox{\vrule width .12em height .0ex depth .075ex \kern-0.16em \char'56}\kern-.07em}
\def\OW{o\kern-.460em\raise1.36ex\hbox{
\vrule width .13em height .0ex depth .075ex \kern-0.16em
\char'56}\kern-.07em}
\def\DD{D\kern-.7em\raise0.4ex\hbox{\char '55}\kern.33em}
%=============================================================================

\title{Stability of closedness of semi-algebraic sets under continuous semi-algebraic mappings}

\author[S. T. \DD inh]{S\~i Ti\d{\^e}p \DD inh$^\dag$}
\address{Institute of Mathematics, VAST, 18, Hoang Quoc Viet Road, Cau Giay District 10307, Hanoi, Vietnam}
\email{dstiep@math.ac.vn}

\author{Zbigniew Jelonek$^\ddagger$}
\address{Institute of Mathematics, Polish Academy of Sciences, \'Sniadeckich 8, 00-656 Warsaw, Poland}
\email{najelone@cyf-kr.edu.pl}

\author[T. S. Ph\d{a}m]{Ti\'{\^e}n-S\OW n Ph\d{a}m$^*$}
\address{Department of Mathematics, Dalat University, 1 Phu Dong Thien Vuong, Dalat, Vietnam}
\email{sonpt@dlu.edu.vn}

%\thanks{$^\dag$The first author is partially supported by Vietnam National Foundation for Science and Technology Development (NAFOSTED) grant 101.04-2019.305}
%\thanks{$^{\ddag}$The second author is partially supported by Vietnam National Foundation for Science and Technology Development (NAFOSTED), grant 101.04-2019.302}

\subjclass{Primary 14P10, 58A35 Secondary 14P15, 32C05, 58A07}
% 14P10 View Publications (1991-now) Semialgebraic sets and related spaces
%14P05 View Publications (1991-now) Real algebraic sets

\keywords{Closedness, tangent cones at infinity, semi-algebraic sets/mappings, linear mappings, stability}

\date{ \today}

\begin{document}

\begin{abstract}
Given a closed semi-algebraic set $X \subset \mathbb{R}^n$ and a continuous semi-algebraic mapping $G \colon X \to \mathbb{R}^m,$ it will be shown that 
there exists an open dense semi-algebraic subset $\mathscr{U}$ of $L(\mathbb{R}^n, \mathbb{R}^m),$ the space of all linear mappings from $\mathbb{R}^n$ to $\mathbb{R}^m,$ such that for all $F \in \mathscr{U},$ the image $(F + G)(X)$ is a closed (semi-algebraic) set in $\mathbb{R}^m.$
To do this, we study the tangent cone at infinity $C_\infty X$ and the set $E_\infty X \subset C_\infty X$ of (unit) exceptional directions at infinity of $X.$ Specifically we show that the set $E_\infty X$ is nowhere dense in $C_\infty X \cap \mathbb{S}^{n - 1}.$ 
\end{abstract}

\maketitle

\pagestyle{plain}

\section{Introduction}

Exploring generic properties of a class of objects such as sets and/or mappings, $\dots$ is a fundamental problem in Theory of Singularities.  In the paper~\cite{Mather1973} (see also \cite{Ichiki2018}), considering the class of {\it projections}, i.e., the restriction of a linear surjective mapping from a vector space $V$ into a vector space $Y$ to a submanifold $X$ of $V,$  John~N.~Mather asked what properties of mappings are true for a generic projection and he gave answers to several special cases of this question. Motivated by this work and various problems in analysis and optimization \cite{Kim2019,Kim2020}, we consider the question if preserving closedness is a generic property of continuous semi-algebraic mappings. Namely, we study when the image of a closed semi-algebraic set under a continuous semi-algebraic mapping is closed and if the closedness is stable under small linear perturbations. The closedness of such images is of significance in analysis, since it allows one to keep lower semi-continuity of functions and to assure the existence of solutions to various extremum problems (see, for example, \cite{Auslender2003,Pataki2001}).

It is well-known that the closedness of the image of a closed convex set under a linear mapping is not necessarily preserved under small perturbations of the linear mapping (see, for example, \cite[Examples~2.1~and~2.2]{Borwein2009}). On the other hand, it was shown recently in
\cite{Dinh2020-1} (see also \cite{Borwein2009, Borwein2010}) that for a given closed convex set $X$ in $\mathbb{R}^n,$ the set
$$L(\mathbb{R}^n, \mathbb{R}^m) \ \setminus \ \Int(\{F\in L(\mathbb{R}^n, \mathbb{R}^m) \ : \ F(X) \text{ is closed}\})$$
is $\sigma$-porous in $L(\mathbb{R}^n, \mathbb{R}^m)$-the space of all linear mappings from $\mathbb{R}^n$ to $\mathbb{R}^m,$ i.e. small with regard to both measure and category. 

The aim of this paper is to prove a similar result by considering a class of sets not necessarily being convex, which is the class of semi-algebraic sets. Precisely, with the definitions given in the next section, the main result of this paper is as follows.

\begin{MainTheorem}
Let $X \subset \mathbb{R}^n$ be a closed semi-algebraic set and let $G \colon X \to \mathbb{R}^m$ be a continuous semi-algebraic mapping. Then the set
$$\{F\in L(\mathbb{R}^n, \mathbb{R}^m) \ : \ (F + G)(X) \text{ is closed}\}$$
is semi-algebraic and contains an open dense semi-algebraic subset of $L(\mathbb{R}^n, \mathbb{R}^m).$
\end{MainTheorem}

Note that in the setting of Semi-Algebraic Geometry, a semi-algebraic subset of $\mathbb{R}^N$ is open dense if and only if  its complement is $\sigma$-porous.  Moreover, although the results presented here still hold for sets/mappings definable in some o-minimal structure (see \cite{Dries1996, Dries1998} for more on the subject), we prefer to work with semi-algebraic sets/mappings for simplicity.  We also emphasize that the technique used in~\cite{Dinh2020-1} for the convex case can not be applied to prove Main Theorem.

In the case where $X$ is a smooth irreducible affine variety in $\mathbb C^n$ of dimension~$\leqslant m$ and $G\colon X\to \mathbb C^m$ is a polynomial mapping, a similar result can be found in~\cite[Theorem~8.2]{Farnik2020}. The proof of this result, which only deals  with finite mappings, exploits the fact that a projection of an affine variety of dimension not greater than $m$ on $\mathbb C^m$ is generically proper (see \cite[Theorem~2.1 and Corollary~2.2]{Farnik2020}). On the other hand, our main result treats continuous semi-algebraic mappings on closed semi-algebraic sets with no constraint on the dimension; in addition, the proof given here is different from that given in~\cite{Farnik2020} . 

The rest of the paper is organized as follows. In Section~\ref{SectionPreliminary}, we present some preliminaries of Semi-Algebraic Geometry which will be used later. 
The notion and some basic properties of of tangent cone at infinity will be provided in Section~\ref{Section3}. 
Finally, the proof of Main Theorem will be completed in Section~\ref{Section5}.

\section{Preliminaries} \label{SectionPreliminary}

\subsection{Notation}
Let $\mathbb{R}^n$ denote the Euclidean space of dimension $n.$ The corresponding inner product (resp., norm) in $\mathbb{R}^n$  is denoted by $\langle x, y \rangle$ for any $x, y \in \mathbb{R}^n$ (resp., $\| x \| := \sqrt{\langle x, x \rangle}$ for any $x \in \mathbb{R}^n$).  We will denote by $\mathbb B^n_r$ and $\mathbb S^{n-1}_r(x),$ respectively, the open ball and the sphere of radius $r$ centered at the origin in $\mathbb R^n.$ For simplicity, we write $\mathbb B^n$ and $\mathbb S^{n-1}$ if $r = 1.$

The closure and the interior of $X\subset\mathbb R^n$ are denoted respectively by $\overline X$ and $\Int (X).$

Let $L(\mathbb R^n,\mathbb R^m)$ denote the set of all linear mappings from $\mathbb{R}^n$ to $\mathbb{R}^m.$ For any linear mapping $F\in L(\mathbb R^n,\mathbb R^m),$ we may identify $F$ with the matrix of $F$ in the canonical bases of $\mathbb R^n$ and $\mathbb R^m,$ and so we can identify $L(\mathbb R^n,\mathbb R^m)$ with $\mathbb{R}^{m \times n}$.

\subsection{Semi-algebraic geometry}

We recall some notions and results of Semi-Algebraic Geometry, which can be found in \cite{Benedetti1990, Bochnak1998, Coste2000-1, Dries1996, Dries1998}.

\begin{definition}{\rm 
\begin{enumerate}[{\rm (i)}]
  \item A subset of $\mathbb{R}^n$ is called {\em semi-algebraic} if it is a finite union of sets of the form 
$$\{x \in \mathbb{R}^n: \ f(x) = 0 ;\ g_i(x) > 0, i = 1, \ldots, k\}$$
where $f$ and all $g_{i}$ are polynomials.
 \item Let $X \subset \mathbb{R}^n$ and $Y \subset \mathbb{R}^m$ be semi-algebraic sets. A mapping $F \colon X \to Y$ is said to be {\em semi-algebraic} if its graph 
$$\{(x, y) \in X \times Y  : \ y = F(x)\}$$
is a semi-algebraic subset of $\mathbb{R}^n\times\mathbb{R}^m.$
\end{enumerate} }
\end{definition}

A major fact concerning the class of semi-algebraic sets is the following theorem.

\begin{theorem}[{Tarski--Seidenberg,  \cite[Theorem 2.3.4]{Benedetti1990}, \cite[Proposition~2.2.7]{Bochnak1998}, \cite[Proposition~1.7]{HaHV2017}}] \label{TarskiSeidenbergTheorem1}
The image of a semi-algebraic set by a semi-algebraic mapping is semi-algebraic.
\end{theorem}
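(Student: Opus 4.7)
The plan is to reduce the statement to the classical projection form of the Tarski--Seidenberg theorem. Given semi-algebraic sets $X \subset \mathbb{R}^n$, $Y \subset \mathbb{R}^m$ and a semi-algebraic mapping $F \colon X \to Y$, I would first observe that
$$F(X) \; = \; \pi_m(\graph F),$$
where $\pi_m \colon \mathbb{R}^n \times \mathbb{R}^m \to \mathbb{R}^m$ is the canonical projection onto the last $m$ coordinates. By definition of a semi-algebraic mapping, $\graph F$ is a semi-algebraic subset of $\mathbb{R}^{n+m}$, so it suffices to show that every linear coordinate projection of a semi-algebraic set is semi-algebraic.

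Next, I would factor $\pi_m$ as a composition of $n$ one-step projections of the form $\mathbb{R}^{k+1} \to \mathbb{R}^k$ that forget a single coordinate, reducing the problem by induction to the following one-step statement: if $S \subset \mathbb{R}^{N+1}$ is semi-algebraic and $\pi \colon \mathbb{R}^{N+1} \to \mathbb{R}^N$, $(x,t) \mapsto x$ is the projection, then $\pi(S)$ is semi-algebraic in $\mathbb{R}^N$. Since $S$ is a finite union of basic semi-algebraic sets of the form $\{f = 0,\, g_1 > 0, \ldots, g_k > 0\}$ and $\pi$ commutes with unions, I may further assume that $S$ itself is basic, so that the task becomes showing the set
$$\pi(S) \; = \; \bigl\{x \in \mathbb{R}^N \; : \; \exists\, t \in \mathbb{R},\ f(x,t) = 0 \text{ and } g_i(x,t) > 0 \text{ for all } i = 1, \ldots, k\bigr\}$$
is semi-algebraic. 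Viewing $f(x,t)$ and the $g_i(x,t)$ as polynomials in $t$ with coefficients that are polynomials in $x$, the existence of a real $t$ realizing the prescribed sign pattern can be recast as a condition on sign patterns of a finite family of univariate polynomials whose coefficients depend polynomially on $x$.

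The main technical obstacle lies in this univariate sign-realization step. The classical tool is Thom's lemma (equivalently, the theory of Sturm--Sylvester sequences and subresultants), which provides an explicit description of realizable sign conditions of a finite family of univariate polynomials in terms of polynomial inequalities on their coefficients; these translate directly into polynomial inequalities in $x$, exhibiting $\pi(S)$ as semi-algebraic. The subtlety is handling the locus where leading coefficients vanish, where $f(x, \cdot)$ degenerates (e.g.\ becomes identically zero), and where the number of real roots of $f(x, \cdot)$ jumps; this forces a stratification of $\mathbb{R}^N$ into semi-algebraic strata on which the relevant subresultants behave uniformly. Once this step is established, the remainder of the argument is purely formal: an induction on the codimension of the projection combined with finite unions of basic descriptions. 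A cleaner, albeit heavier, alternative would be to invoke cylindrical algebraic decomposition, which partitions $\mathbb{R}^N$ into finitely many semi-algebraic cells over which the projection is transparent.
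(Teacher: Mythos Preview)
The paper does not actually prove this theorem: it is stated as a cited result (\cite[Theorem~2.3.4]{Benedetti1990}, \cite[Proposition~2.2.7]{Bochnak1998}, \cite[Proposition~1.7]{HaHV2017}) with no accompanying argument, so there is no in-paper proof to compare against. Your sketch is a correct outline of the classical proof found in those very references --- reduce to the graph, factor the projection into one-variable eliminations, and handle the univariate quantifier via subresultants/Thom's lemma or, equivalently, cylindrical algebraic decomposition --- and would be an appropriate expansion if a proof were required here.
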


Moreover, we have the following properties:
\begin{enumerate}[{\rm (i)}]
\item The class of semi-algebraic sets is closed with respect to Boolean operators, taking Cartesian product, closure, interior and inverse image under semi-algebraic mappings;
\item A composition of semi-algebraic mappings is a semi-algebraic mapping;
\item The  inverse image of  a  semi-algebraic set  under  a  semi-algebraic mapping is semi-algebraic;
\end{enumerate}

\subsection{Stratification of semi-algebraic sets}

\begin{definition}{\rm 
A {\em semi-algebraic stratification} of a semi-algebraic subset $X$ of $\mathbb R^n$ is a finite partition $\mathcal S:=\{X_\alpha\}_{\alpha \in I}$ of $X$ such that:
\begin{enumerate}[\ $\bullet$]
\item Each $X_\alpha$ (called a {\em stratum of the stratification} or, briefly, a {\em stratum of $X$}) is a connected semi-algebraic analytic submanifold of $\mathbb R^n$.

\item The following {\em frontier condition} holds: If ${X_\alpha}\cap \overline{X}_\beta\ne\emptyset$, then ${X_\alpha}\subset \overline{X}_\beta.$
\end{enumerate} }
\end{definition}

The following is well-known (see, for example, \cite[Proposition~2.5.1]{Benedetti1990} and \cite[Proposition~9.1.8]{Bochnak1998}).

\begin{proposition}\label{CD1}
Every semi-algebraic set admits a semi-algebraic stratification.
\end{proposition}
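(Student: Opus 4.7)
The plan is to derive the result from the cylindrical algebraic decomposition (CAD). The reduction step is to produce, for any finite family $\mathcal{P} \subset \mathbb{R}[x_1, \ldots, x_n]$ such that $X$ is a Boolean combination of the sign loci of polynomials in $\mathcal{P}$, a finite partition of $\mathbb{R}^n$ into connected semi-algebraic analytic submanifolds (``cells''), each lying in a constant-sign locus of every $P \in \mathcal{P}$, and satisfying the frontier condition. Intersecting such a partition with $X$ then yields a semi-algebraic stratification of $X$ in the sense of the definition.

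The construction of the CAD proceeds by induction on $n$. For $n = 1$ the real roots of the polynomials in $\mathcal{P}$ partition $\mathbb{R}$ into finitely many points and open intervals, which are connected semi-algebraic analytic submanifolds, and the frontier condition holds trivially. For the inductive step, one produces by a projection/elimination procedure (using resultants, discriminants, and leading coefficients of members of $\mathcal{P}$) a finite family $\mathcal{P}' \subset \mathbb{R}[x_1, \ldots, x_{n-1}]$ with the key property that, above every cell $C$ of a CAD of $\mathbb{R}^{n-1}$ adapted to $\mathcal{P}'$, the real roots in $x_n$ of the polynomials in $\mathcal{P}$ are given by finitely many analytic functions $\xi_1 < \cdots < \xi_k$ on $C$ that never collide. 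The cells in $\mathbb{R}^n$ above $C$ are then the graphs of the $\xi_j$ and the open bands $\{(x', x_n) : x' \in C,\ \xi_j(x') < x_n < \xi_{j+1}(x')\}$, together with the two unbounded ends; each is a connected semi-algebraic analytic submanifold, and its compatibility with $\mathcal{P}$ is immediate from the choice of $\mathcal{P}'$. Semi-algebraicity of the cells and of the root functions is ensured by Tarski--Seidenberg (Theorem~\ref{TarskiSeidenbergTheorem1}).

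The main obstacle is the frontier condition, which for an arbitrary partition into smooth pieces can fail. The cylindrical structure of CAD is precisely what saves it: a cell above $C$ can only meet in its closure cells above cells $C'$ with $C' \subset \overline{C}$, and within each such fiber the closure of a graph is again a graph (of the limiting root function) while the closure of a band is the band together with its two bounding graphs. An induction on $n$, combining the frontier condition for the CAD of $\mathbb{R}^{n-1}$ with the continuity of the root functions up to the boundary of their domain of definition, shows that the closure of every cell is a union of cells. Finally, since the cells produced this way need not be connected, one replaces each by its (finitely many) connected semi-algebraic components as a last refinement, preserving both the compatibility with $\mathcal{P}$ and the frontier condition, and then intersects with $X$ to conclude.
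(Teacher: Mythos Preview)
The paper does not prove this proposition itself; it simply records it as well known and cites \cite[Proposition~2.5.1]{Benedetti1990} and \cite[Proposition~9.1.8]{Bochnak1998}. Your approach via cylindrical algebraic decomposition is indeed one of the standard routes taken in those references, so in spirit you are aligned with what the paper defers to.

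There is, however, a genuine gap in your treatment of the frontier condition. You claim that for a CAD ``the closure of a graph is again a graph (of the limiting root function) while the closure of a band is the band together with its two bounding graphs,'' invoking ``the continuity of the root functions up to the boundary of their domain of definition.'' Neither statement holds for a general CAD. Take $f(x,y,z) = y - xz$: the Collins projection to the $(x,y)$-plane yields $\{x, y\}$, and over the base cell $\{x>0,\ y>0\}$ the unique root function is $\xi(x,y) = y/x$, which does \emph{not} extend continuously to the origin. The closure of its graph meets the fibre over $(0,0)$ in $\{(0,0)\}\times[0,\infty)$, a proper nonempty subset of the single cell $\{(0,0)\}\times\mathbb{R}$ sitting over the origin; the frontier condition fails. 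The cylindrical structure alone does not ``save'' it.

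A correct proof along your lines requires an additional refinement step: one shows that the locus where closures of cells fail to be unions of cells is semi-algebraic of strictly lower dimension, adds it to the family of sets the decomposition must be adapted to, and iterates; this terminates by a dimension argument. That refinement is precisely the nontrivial content of the proposition, and your sketch omits it. (As a minor aside, CAD cells are already connected---each is semi-algebraically homeomorphic to an open cube---so your final ``connected components'' step is unnecessary.)
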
 

Let $X\subset\mathbb R^n$ be a semi-algebraic set and let $\mathcal S:=\{X_\alpha\}_{\alpha \in I}$ be a semi-algebraic stratification of $X.$ We define the {\em dimension} of $X$ by
$$\dim X:=\max\{\dim X_\alpha:\ \alpha\in I\}.$$
It is not hard to check that this definition of dimension does not depend on the stratification of $X.$ For convenience, set $\dim\emptyset=-1$.  
Furthermore, we have the following properties of the dimension; see, for example, \cite[Section~2.8]{Bochnak1998}, \cite[Proposition~1.4]{HaHV2017}, \cite[Property~4.7]{Dries1996}.

% Given $x\in X$, the {\em dimension of $X$ at $x$} is defined by
% $$\dim_x X:=\max\{\dim X_\alpha:\ \alpha\in I,\ x\in\overline{X}_\alpha\}.$$ 
%Obviously $\dim_x X=\dim T_xX$ if $x$ is a $C^\infty$ point of $X$, where $T_xX$ denotes the tangent space to $X$ at $x$. 
% We say that $X$ has {\em pure dimension} $d$ if $\dim_x X=d$ for any $x\in X$. Moreover, for $x\in X$, we say that $X$ is of {\em pure dimension $d$ at $x$} if there exists an open semi-algebraic neighborhood $U$ of $x$ in $\mathbb R^n$ such that $\dim_yX=d$ for any $y\in X\cap U.$ 

\begin{lemma} \label{DimensionLemma}
If $X$ be a non-empty semi-algebraic set in $\mathbb{R}^n,$ then $\dim(\overline{X}\setminus X)<\dim X.$ In particular, $\dim\overline{X}=\dim X.$
\end{lemma}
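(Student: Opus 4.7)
\medskip

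\noindent\textbf{Proof proposal.} The plan is to use Proposition~\ref{CD1} to produce a convenient finite semi-algebraic stratification of $\overline{X}$ and then pin down the dimension of each stratum contained in the frontier $\overline{X}\setminus X$ via the frontier condition. First, I would apply Proposition~\ref{CD1} to stratify $\overline{X}$ in a way that is \emph{compatible with $X$}, meaning that each stratum is either contained in $X$ or disjoint from $X$. (This can be arranged by stratifying $\overline{X}$ together with its distinguished subset $X$: the standard construction in the references cited for Proposition~\ref{CD1} allows one to refine the partition so that every semi-algebraic subset in a given finite list is a union of strata; alternatively one stratifies $X$ and $\overline{X}\setminus X$ separately and takes a common refinement.) Write this stratification as $\{S_\alpha\}_{\alpha\in I}$ and partition $I=J\sqcup J'$ according to whether $S_\alpha\subset X$ or $S_\alpha\subset\overline{X}\setminus X$.

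Next, I would show that every stratum in the frontier has dimension strictly less than $\dim X$. Fix $\beta\in J'$ and pick $p\in S_\beta$. Because $p\in\overline{X}$ and $X=\bigcup_{\alpha\in J}S_\alpha$, there exists a sequence $(x_k)\subset X$ with $x_k\to p$. Since $J$ is finite, after passing to a subsequence all $x_k$ lie in a single stratum $S_\alpha$ with $\alpha\in J$. Thus $p\in\overline{S_\alpha}\cap S_\beta$, so by the frontier condition $S_\beta\subset\overline{S_\alpha}$; as $\alpha\neq\beta$ (one is in $X$, the other is not), this forces $\dim S_\beta<\dim S_\alpha\leqslant\dim X$. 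Taking the maximum over all $\beta\in J'$ gives $\dim(\overline{X}\setminus X)<\dim X$.

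Finally, the ``in particular'' assertion follows at once: writing $\overline{X}=X\cup(\overline{X}\setminus X)$ and using the elementary fact that the dimension of a finite union of semi-algebraic sets is the maximum of the individual dimensions (which is immediate from the definition via stratifications), we obtain $\dim\overline{X}=\max\{\dim X,\dim(\overline{X}\setminus X)\}=\dim X$.

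The only step requiring care is the availability of a stratification of $\overline{X}$ compatible with the subset $X$; this is the main ``obstacle,'' but it is a standard strengthening of Proposition~\ref{CD1} that is proved in the same references, and once it is in place the frontier condition does all the work. Everything else is routine pigeonholing on finitely many strata.
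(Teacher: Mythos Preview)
The paper does not actually prove Lemma~\ref{DimensionLemma}; it merely records it as a standard fact with pointers to \cite{Bochnak1998}, \cite{HaHV2017}, and \cite{Dries1996}. So there is no in-paper argument to compare with, and your proposal should be judged on its own.

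Your outline is mostly sound, but one step hides a circularity. From $S_\beta\subset\overline{S_\alpha}$ with $\alpha\neq\beta$ you conclude $\dim S_\beta<\dim S_\alpha$. Since the strata are disjoint, this amounts to $S_\beta\subset\overline{S_\alpha}\setminus S_\alpha$ and hence to the inequality $\dim(\overline{S_\alpha}\setminus S_\alpha)<\dim S_\alpha$, which is precisely Lemma~\ref{DimensionLemma} applied to the single stratum $S_\alpha$. The paper's Definition of a semi-algebraic stratification only requires that each stratum be a connected analytic submanifold and that the frontier condition hold; the dimension drop along the frontier is \emph{not} part of that definition, and the frontier condition by itself does not deliver it. In the references cited, the inequality $\dim(\overline{X}\setminus X)<\dim X$ is proved directly from cylindrical (cell) decomposition, prior to and independently of stratification, exactly to avoid this bootstrap.

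To repair your approach you have two options: either (a) invoke the stronger form of Proposition~\ref{CD1} available in the same references, in which the constructed stratification additionally satisfies $\dim S_\beta<\dim S_\alpha$ whenever $S_\beta\subset\overline{S_\alpha}\setminus S_\alpha$ (this is what cell decompositions produce), and say so explicitly; or (b) supply an independent argument for the special case where $X$ is a connected semi-algebraic $C^\omega$-submanifold, e.g.\ via a local semi-algebraic trivialization showing that no semi-algebraic subset of $\overline{X}\setminus X$ can have dimension $\geqslant\dim X$. Either way, once the manifold case is secured, your reduction via a compatible stratification and pigeonholing over finitely many strata is correct, and the ``in particular'' clause follows exactly as you wrote.
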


\begin{lemma}\label{DenseIsOpen} 
Let $X\subset\mathbb R^n$ be a semi-algebraic set. The following statements are equivalent.
\begin{enumerate}[{\rm (i)}]
\item $X$ is dense in $\mathbb R^n;$
\item $X$ contains an open dense semi-algebraic subset of $\mathbb R^n;$
\item $\dim(\mathbb{R}^n \setminus X) < n.$
\end{enumerate}
\end{lemma}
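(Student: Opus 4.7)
The plan is to prove the three equivalences cyclically as $(\mathrm{i}) \Rightarrow (\mathrm{iii}) \Rightarrow (\mathrm{ii}) \Rightarrow (\mathrm{i})$, with Lemma~\ref{DimensionLemma} doing the heavy lifting in both nontrivial directions. Throughout I will tacitly use that the closure, interior, and complement of a semi-algebraic set are semi-algebraic, so all sets appearing in the argument are semi-algebraic and their dimension is defined.

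For $(\mathrm{i}) \Rightarrow (\mathrm{iii})$, I would start from $\overline{X}=\mathbb R^n$, which forces $X\neq\emptyset$ and (by Lemma~\ref{DimensionLemma}) $\dim X=\dim\overline X=n$. Since $\overline X=\mathbb R^n$, the complement satisfies $\mathbb R^n\setminus X=\overline X\setminus X$, and the second clause of Lemma~\ref{DimensionLemma} gives $\dim(\mathbb R^n\setminus X)=\dim(\overline X\setminus X)<\dim X=n$, which is exactly (iii).

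For $(\mathrm{iii})\Rightarrow(\mathrm{ii})$, set $Y:=\mathbb R^n\setminus X$ and observe the set-theoretic identity $\mathbb R^n\setminus\Int(X)=\overline{Y}$. Using Lemma~\ref{DimensionLemma} once more, $\dim\overline Y=\dim Y<n$. Now any nonempty open subset of $\mathbb R^n$ has dimension $n$ (it contains a small ball, and every ball is a full-dimensional semi-algebraic submanifold of $\mathbb R^n$), so a semi-algebraic set of dimension strictly less than $n$ must have empty interior. Applied to $\overline Y$, this yields that $\Int(X)=\mathbb R^n\setminus\overline Y$ is open and dense in $\mathbb R^n$; since $\Int(X)$ is semi-algebraic and contained in $X$, this furnishes the open dense semi-algebraic subset required by~(ii).

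Finally $(\mathrm{ii})\Rightarrow(\mathrm{i})$ is immediate: if $X$ contains an open dense subset $U\subset\mathbb R^n$, then $\mathbb R^n=\overline U\subset\overline X$, so $X$ is dense. I do not expect a substantive obstacle here; the only subtlety worth flagging is the observation that a semi-algebraic set of dimension less than $n$ cannot contain a nonempty Euclidean open set, which is what converts the dimension bound in~(iii) into the topological density of~$\Int(X)$ in~(ii).
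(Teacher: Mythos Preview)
Your proof is correct, but the route differs from the paper's. The paper does not argue cyclically: it proves $(\mathrm{i})\Leftrightarrow(\mathrm{ii})$ and $(\mathrm{i})\Leftrightarrow(\mathrm{iii})$ as two separate equivalences. For $(\mathrm{i})\Rightarrow(\mathrm{ii})$ the paper takes a semi-algebraic stratification of $X$ and lets the open dense subset be the union of the $n$-dimensional strata, rather than your choice of $\Int(X)$; for $(\mathrm{i})\Rightarrow(\mathrm{iii})$ it argues by contradiction (a full-dimensional complement would contain an open set, contradicting density), and $(\mathrm{iii})\Rightarrow(\mathrm{i})$ is done directly. Your approach is a little more economical in that it dispenses with the stratification and leans entirely on Lemma~\ref{DimensionLemma}, extracting both nontrivial implications from it; the paper's approach has the minor advantage that each implication stands on its own without passing through the others.
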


\begin{proof} 
(i) $\Rightarrow$ (ii).
Let $\mathcal S = \{X_\alpha\}_{\alpha \in I}$ be a semi-algebraic stratification of ${X}.$ Let $Y$ be the union of strata of dimension $n.$
Then it is easy to see that $Y$ has the desired properties.

(ii) $\Rightarrow$ (i). Clearly.

(i) $\Rightarrow$ (iii).  By contradiction, suppose that $\dim (\mathbb{R}^n \setminus X) = n.$ 
The set $\mathbb{R}^n \setminus X$ is semi-algebraic (because $X$ is semi-algebraic) and hence it contains a nonempty open set. This implies that $\overline{X} \ne \mathbb{R}^n,$ which contradicts the assumption.

(iii) $\Rightarrow$ (i).  
Take any $x \not \in X.$ By assumption, $U \cap {X} \ne \emptyset$ for all open sets $U$ containing $x.$ This implies that $x \in \overline{X}$ and hence $X$ is dense in $\mathbb{R}^n.$
\end{proof}

In the sequel we will make use of the following result. 

\begin{theorem}[{Hardt, \cite{Hardt1980}}] \label{HardtTheorem}
Let $X$ and $Y$ be respectively semi-algebraic sets in $\mathbb{R}^n$ and $\mathbb{R}^m$, $f \colon X \rightarrow Y$ be a continuous semi-algebraic mapping.
Then there exists a partition of $Y$ into finitely many semi-algebraic subsets $Y_i, i = 1, \ldots, p,$ such that $f$ is semi-algebraically trivial over each $Y_i,$ i.e., there exists a semi-algebraic set $F_i \subset \mathbb{R}^n$ and a semi-algebraic homeomorphism
$h_i \colon f^{-1} (Y_i) \rightarrow Y_i \times F_i$ 
such that the following diagram commutes
$$\begin{tikzcd}
f^{-1}(Y_i) \arrow {r}{h_i}
\arrow{rd}{f}
& Y_i \times F_i \arrow {d}{\pi} \\
& Y_i
\end{tikzcd}$$
where $\pi$ is the projection on the first factor.
\end{theorem}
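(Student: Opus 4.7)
I would combine semi-algebraic Whitney stratification with a semi-algebraic version of Thom's first isotopy lemma, proceeding by induction on $\dim Y$. The base case $\dim Y = 0$ is trivial: $Y$ is a finite union of points, and each fiber itself serves as its own $F_i$.

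For the inductive step, I would first refine the stratifications supplied by Proposition~\ref{CD1} to obtain semi-algebraic Whitney stratifications $\{X_\alpha\}$ of $X$ and $\{Y_\beta\}$ of $Y$ that are compatible with $f$, in the sense that $f$ maps each $X_\alpha$ submersively onto some $Y_\beta$ and Whitney's regularity conditions $(a)$ and $(b)$ hold. The existence of such a simultaneous refinement is a standard application of cylindrical algebraic decomposition adapted to $\graph(f)$ together with $X \times \mathbb{R}^m$ and $\mathbb{R}^n \times Y$, followed by a regularity refinement that only subdivides strata.

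Next I would pick a stratum $Y_0$ of $Y$ of maximal dimension (open in $Y$) and trivialize $f$ over $Y_0$. On small semi-algebraic coordinate charts of $Y_0$, I would lift the coordinate vector fields to stratified vector fields on $f^{-1}(Y_0)$ that are tangent to each stratum of $X$ projecting onto $Y_0$; such lifts exist because the restriction of $f$ to each of those strata is a submersion. Integrating the lifted vector fields gives local trivializations of $f^{-1}(Y_0) \to Y_0$, which can then be patched on overlaps of charts using the connectedness of the components of $Y_0$ (passing to components if needed).

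The principal obstacle is that $f$ need not be proper, so the flows of the lifted vector fields need not be complete; Thom--Mather's isotopy theorem does not apply directly. I would handle this by a semi-algebraic compactification: embed $\mathbb{R}^n$ (and $\mathbb{R}^m$) into a sphere via inverse stereographic projection, take the closure of $\graph(f)$ in the product, and refine the stratification so that the added strata at infinity are compatible with the extension of $f$. Applying the isotopy lemma to the resulting proper stratified map and restricting back to $X$ yields the trivialization over $Y_0$. Once $f$ is trivialized over $Y_0$, the inductive hypothesis applied to $f|_{f^{-1}(Y \setminus Y_0)}$, whose target has dimension strictly less than $\dim Y$ by Lemma~\ref{DimensionLemma}, supplies a finite partition of $Y \setminus Y_0$; combining the two partitions completes the proof.
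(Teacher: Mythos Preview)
The paper does not prove Hardt's theorem at all: it is stated with a citation to \cite{Hardt1980} and used as a black box, so there is no ``paper's own proof'' to compare against.

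That said, your proposed argument has a genuine gap that is worth naming. You plan to lift coordinate vector fields on $Y_0$ to stratified vector fields on $f^{-1}(Y_0)$ and then \emph{integrate} them to obtain the trivializing homeomorphisms $h_i$. This is exactly the Thom--Mather route, and it produces a \emph{topological} trivialization; but the theorem asserts that $h_i$ is a \emph{semi-algebraic} homeomorphism, and flows of semi-algebraic (even polynomial) vector fields are in general not semi-algebraic---already $\dot x = x$ has flow $x\mapsto e^{t}x$. Compactifying to make the map proper does nothing to cure this: the isotopy lemma you invoke on the compactified stratified map still yields only continuous trivializations. This is precisely why Hardt's theorem is not a corollary of Thom's first isotopy lemma, and why its proof (and the later proofs by Coste and by Shiota) proceeds instead through semi-algebraic triangulation or cell-decomposition techniques that build the trivialization combinatorially, never appealing to integration of vector fields. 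Your outline would need to replace the flow step by such a construction to go through.
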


As a consequence of Theorem~\ref{HardtTheorem}, we obtain the following useful corollaries. 

\begin{corollary}\label{Everywhere} 
Let $\mathscr{U} \subset \mathbb R^n\times\mathbb R^m$ be an open dense semi-algebraic set. Then there is a set $\mathscr{V}\subset\mathbb R^m$ containing an open dense semi-algebraic set in $\mathbb R^m$ such that for any $y \in \mathscr{V},$ the semi-algebraic set
$$\mathscr{U}_y:=\{x\in\mathbb R^n:\ (x,y)\in \mathscr{U}\}$$
contains an open dense semi-algebraic set in $\mathbb R^n.$
\end{corollary}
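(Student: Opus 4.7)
The plan is to reduce the statement to a generic-fiber-dimension argument applied to the complement $Z := (\mathbb{R}^n \times \mathbb{R}^m) \setminus \mathscr{U}$, and then invoke Hardt's trivialization theorem to control the fibers.

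First, since $\mathscr{U}$ is open and dense in $\mathbb{R}^n \times \mathbb{R}^m$, Lemma~\ref{DenseIsOpen} applied in $\mathbb{R}^{n+m}$ gives $\dim Z < n+m$. Let $\pi \colon Z \to \mathbb{R}^m$ denote the restriction of the projection $(x,y) \mapsto y$; this is continuous and semi-algebraic. Apply Theorem~\ref{HardtTheorem} (Hardt) to obtain a finite partition of $\mathbb{R}^m$ into semi-algebraic subsets $Y_1, \ldots, Y_p$ and semi-algebraic homeomorphisms $h_i \colon \pi^{-1}(Y_i) \to Y_i \times F_i$ commuting with projection.

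Next I would verify the key dimension estimate: if $\dim Y_i = m$ then $\dim F_i < n$. Indeed, $\pi^{-1}(Y_i) \subset Z$ is semi-algebraically homeomorphic to $Y_i \times F_i$, so
\[
\dim Y_i + \dim F_i \;=\; \dim(Y_i \times F_i) \;=\; \dim \pi^{-1}(Y_i) \;\leqslant\; \dim Z \;<\; n+m.
\]
When $\dim Y_i = m$ this forces $\dim F_i < n$. Consequently, for every $y \in Y_i$ with $\dim Y_i = m$, the fiber $Z_y := \pi^{-1}(y)$ is homeomorphic to $F_i$ and has dimension strictly less than $n$. Since $\mathscr{U}_y = \mathbb{R}^n \setminus Z_y$, Lemma~\ref{DenseIsOpen} then guarantees that $\mathscr{U}_y$ contains an open dense semi-algebraic subset of $\mathbb{R}^n$.

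Finally, set $\mathscr{V} := \bigcup \{Y_i : \dim Y_i = m\}$. This is a semi-algebraic subset of $\mathbb{R}^m$, and its complement $\bigcup \{Y_i : \dim Y_i < m\}$ has dimension strictly less than $m$ (using Lemma~\ref{DimensionLemma} together with the fact that a finite union of sets of dimension $< m$ has dimension $< m$). Applying Lemma~\ref{DenseIsOpen} once more in $\mathbb{R}^m$, the set $\mathscr{V}$ contains an open dense semi-algebraic subset of $\mathbb{R}^m$, and by construction every $y \in \mathscr{V}$ lies in some $Y_i$ of dimension $m$, so $\mathscr{U}_y$ has the required property.

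The only delicate point is the passage from a bound on $\dim Z$ to a bound on the generic fiber dimension; this is exactly where Hardt's theorem is essential, because without semi-algebraic triviality one cannot in general control every fiber over a given stratum. Once Hardt is invoked, the rest is a dimension count and two applications of Lemma~\ref{DenseIsOpen}.
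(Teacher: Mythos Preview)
Your proposal is correct and follows essentially the same route as the paper's proof: apply Hardt's theorem to the projection of the complement $Z = (\mathbb{R}^n\times\mathbb{R}^m)\setminus\mathscr{U}$ onto $\mathbb{R}^m$, take $\mathscr{V}$ to be the union of the strata $Y_i$ of dimension $m$, and use a dimension count to bound the fibers. The only difference is stylistic: the paper proves the fiber bound by contradiction (assuming some $\mathscr{U}_y$ is not dense and deriving $\dim f^{-1}(Y_i)=n+m$), whereas you argue it directly via $\dim Y_i+\dim F_i\leqslant\dim Z<n+m$. One trivial point to tidy up: $\pi^{-1}(y)$ lives in $\mathbb{R}^n\times\{y\}$, not in $\mathbb{R}^n$, so the equality $\mathscr{U}_y=\mathbb{R}^n\setminus Z_y$ should be read modulo the obvious identification $\mathbb{R}^n\times\{y\}\cong\mathbb{R}^n$.
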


\begin{proof} 
Let $f\colon(\mathbb R^n\times\mathbb R^m)\setminus \mathscr{U} \to\mathbb R^m$ be the projection $(x,y)\mapsto y$. 
In light of Theorem~\ref{HardtTheorem}, we can write  $\mathbb R^m = \bigcup_{i = 1}^p Y_i$ as a disjoint union of semi-algebraic sets $Y_i \subset \mathbb R^m$ such that $f^{-1}(Y_i)$ is semi-algebraically homeomorphic to $f^{-1}(y_i)\times Y_i,$ for each $i$ and any $y_i\in Y_i.$ Let 
$$\displaystyle \mathscr{V} :=\bigcup_{\substack{i=1,\dots,p;\\\dim Y_i=m}}Y_i.$$
Then $\mathscr{V}$ is a semi-algebraic set in $\mathbb R^m$ satisfying $\dim(\mathbb R^m\setminus \mathscr{V})<m.$ By Lemma~\ref{DenseIsOpen}, $\mathscr{V}$ contains an open dense semi-algebraic subset of $\mathbb R^m.$

Assume for contradiction that there exists $y \in \mathscr{V}$ such that $\mathscr{U}_{y}$ does not contain an open dense semi-algebraic set in $\mathbb R^n.$ It follows that $\mathbb R^n\setminus \mathscr{U}_{y}$ contains an open set in $\mathbb R^n,$ which yields that
$$\dim (f^{-1}(y))=\dim((\mathbb R^n\setminus \mathscr{U}_{y})\times\{y\}) = n.$$ 
Let $i\in\{1,\dots,p\}$ be such that $y\in Y_i.$ 
Since $f^{-1}(Y_i)$ is semi-algebraically homeomorphic to $f^{-1}(y)\times Y_i,$ it follows that 
$$\dim(f^{-1}(Y_i))=\dim(f^{-1}(y))+\dim Y_i=n+m,$$
Hence, $f^{-1}(Y_i)$ contains an open subset of $\mathbb R^n\times\mathbb R^m$. This contradicts the assumption that the set $\mathscr{U}$ is dense in $\mathbb R^n\times\mathbb R^m.$ The corollary is proved.
\end{proof}

\begin{corollary}\label{DimInfinity} 
Let $X\subset\mathbb R^n$ be an unbounded semi-algebraic set. For $R>0$ large enough, the following statements hold:
\begin{enumerate}[{\rm (i)}]
\item the dimension of $X\setminus\mathbb B^n_R$ is constant and is called the {\em dimension at infinity} of $X$, denoted by $\dim_\infty X;$
\item the dimension of $X\cap\mathbb{S}^{n - 1}_R$ is constant and equals to $\dim_\infty X - 1.$
\end{enumerate}
\end{corollary}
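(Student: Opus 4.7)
The plan is to apply Hardt's triviality theorem (Theorem~\ref{HardtTheorem}) to the continuous semi-algebraic function $\rho \colon X \to \mathbb{R}$, $\rho(x) := \|x\|$. Hardt yields a finite partition $\mathbb{R} = \bigsqcup_{i=1}^p Y_i$ into semi-algebraic subsets such that over each $Y_i$ the restriction of $\rho$ is semi-algebraically trivial: there is a semi-algebraic fiber $F_i$ and a semi-algebraic homeomorphism $\rho^{-1}(Y_i) \cong Y_i \times F_i$ commuting with the projection onto $Y_i$.

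Next I would exploit the very restrictive structure of semi-algebraic subsets of $\mathbb{R}$: each $Y_i$ is a finite disjoint union of points and open intervals. Since $X$ is unbounded, $\rho(X) \subset [0,\infty)$ is unbounded, so at least one piece, say $Y_{i_0}$, is unbounded and therefore contains a tail $(R_0, \infty)$. I would set $d := \dim F_{i_0} + 1$ and claim that any $R > R_0$ works for both items of the corollary.

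For such $R$, the trivialization restricts to a semi-algebraic homeomorphism $\rho^{-1}((R,\infty)) \cap X \cong (R,\infty) \times F_{i_0}$, whose dimension is $\dim F_{i_0} + 1 = d$; similarly $X \cap \mathbb{S}^{n-1}_R = \rho^{-1}(\{R\}) \cap X \cong \{R\} \times F_{i_0}$ has dimension $\dim F_{i_0} = d-1$, which is constant in $R$ and proves (ii) upon defining $\dim_\infty X := d$. Finally, writing
\[
X \setminus \mathbb{B}^n_R \;=\; \bigl(X \cap \mathbb{S}^{n-1}_R\bigr) \,\cup\, \bigl(\rho^{-1}((R,\infty)) \cap X\bigr),
\]
the standard fact that dimension of a finite union is the maximum of the dimensions yields $\dim(X \setminus \mathbb{B}^n_R) = \max(d-1, d) = d$, proving (i).

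There is no serious obstacle here; the only things to verify along the way are harmless, e.g. that $F_{i_0}$ is nonempty (true because $X$ meets arbitrarily large spheres) and that the two definitions of dimension — via the trivialization and via the stratification definition from Section~\ref{SectionPreliminary} — agree under the semi-algebraic homeomorphism. The one conceptual step worth highlighting is the reduction from the geometric statement ``behavior of $X$ near infinity stabilizes'' to the combinatorial statement ``a semi-algebraic partition of $\mathbb{R}$ has an unbounded piece of the form $(R_0,\infty)$''; Hardt's theorem is precisely what bridges these two.
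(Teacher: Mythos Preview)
Your proof is correct and follows the same approach as the paper: apply Hardt's theorem to the norm function $x \mapsto \|x\|$ to obtain triviality over a tail $(R_0,\infty)$, then read off the dimensions of $X\setminus\mathbb{B}^n_R$ and $X\cap\mathbb{S}^{n-1}_R$ from the product structure. The paper's proof is in fact just a two-line sketch that stops after invoking Hardt (``from this, the desired conclusion follows easily''), so your version simply fills in the details that the authors left implicit.
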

\begin{proof} 
Consider the continuous semi-algebraic function $f \colon X \to \mathbb R, x \mapsto \|x\|.$ In light of Theorem~\ref{HardtTheorem}, we can find a constant $R > 0$ such that $f$ is semi-algebraically trivial over the interval $(R, +\infty).$ From this, the desired conclusion follows easily.
\end{proof}

\subsection{Sard's theorem with parameter}

Next we state a semialgebraic version of Sard's theorem with the
parameter  in a simplified form which is sufficient for the
applications studied here. Given a differentiable map between differentiable
manifolds $f \colon X \rightarrow Y,$ a point $y \in Y$ is called a
{\em regular value} \index{regular value} for $f$ if either $f^{-1}(y)
= \emptyset$ or the derivative map $Df(x) \colon T_xX \rightarrow
T_yY$ is surjective at every point $x \in f^{-1}(y),$ where $T_x X$
and $T_yY$ denote the tangent spaces of $X$ at $x$ and of $Y$ at $y,$
respectively. A point $y \in Y$ that is not a regular value of $f$ is
called a {\em critical value.} The following result is also called Thom's weak transversality theorem.

\begin{theorem}[Sard's theorem with parameter] \label{SardTheorem}
Let $f \colon X \times \mathscr{P} \rightarrow Y$ be a differentiable
semialgebraic map between semialgebraic submanifolds.
If $y \in Y$ is a regular value of $f,$ then there exists a
semialgebraic set $\Sigma \subset \mathscr{P}$
of dimension smaller than the dimension of $\mathscr{P}$ such that$,$ for every $p
\in \mathscr{P} \setminus \Sigma,$ $y$ is a regular value of the map
$f_p \colon X \rightarrow Y, x \mapsto f(x, p).$
\end{theorem}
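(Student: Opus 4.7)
The plan is to reduce everything to one application of the semialgebraic Sard theorem, applied not to $f$ itself but to the projection onto $\mathscr{P}$ from the preimage of $y$, and then to conclude by a short linear-algebra identity. Because $y$ is a regular value of $f$, the preimage
$$M := f^{-1}(y) \subset X \times \mathscr{P}$$
is, by the implicit function theorem, a semialgebraic differentiable submanifold of dimension $\dim X + \dim \mathscr{P} - \dim Y$, with tangent space $T_{(x,p)} M = \ker Df(x,p)$ at every point. If $M = \emptyset$, the theorem is trivial with $\Sigma = \emptyset$; so I assume $M \neq \emptyset$.

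Next, I would consider the semialgebraic differentiable projection
$$\pi \colon M \to \mathscr{P}, \qquad (x,p) \mapsto p,$$
and take $\Sigma \subset \mathscr{P}$ to be its set of critical values. By the classical Sard theorem, $\Sigma$ has Lebesgue measure zero in $\mathscr{P}$. By Theorem~\ref{TarskiSeidenbergTheorem1}, $\Sigma$ is also semialgebraic, being the image under the semialgebraic map $\pi$ of the semialgebraically defined critical locus $\{(x,p)\in M : D\pi(x,p) \text{ is not surjective}\}$. A semialgebraic subset of $\mathscr{P}$ of measure zero necessarily has dimension strictly less than $\dim \mathscr{P}$, so this $\Sigma$ already has the required property.

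It remains to verify that for each $p \in \mathscr{P} \setminus \Sigma$, $y$ is a regular value of $f_p$. Pick $x \in f_p^{-1}(y)$, so $(x,p) \in M$, and fix $v \in T_y Y$. Surjectivity of $Df(x,p)$ gives $u \in T_x X$ and $w \in T_p \mathscr{P}$ with $D_x f(x,p)\, u + D_p f(x,p)\, w = v$. Since $p$ is a regular value of $\pi$, the restriction of the projection to $T_{(x,p)} M = \ker Df(x,p)$ is surjective onto $T_p \mathscr{P}$, so there exists $u' \in T_x X$ with $D_x f(x,p)\, u' + D_p f(x,p)\, w = 0$. Subtracting yields $v = D_x f(x,p)(u - u') \in \im Df_p(x)$, so $Df_p(x)$ is surjective.

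The principal technical input I rely on is the semialgebraic strengthening of Sard's theorem, namely the implication that a semialgebraic subset of $\mathscr{P}$ of measure zero has dimension strictly less than $\dim \mathscr{P}$; this is standard and may be justified via a Hardt-type trivialization of $\pi$ (Theorem~\ref{HardtTheorem}) or via the generic-local-product structure of semialgebraic sets. Everything else is formal: one application of the implicit function theorem and the above elementary manipulation of the split $Df = (D_x f, D_p f)$. I expect no substantive obstacle beyond invoking the semialgebraic Sard result.
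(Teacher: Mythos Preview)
Your argument is correct and is exactly the standard reduction used to prove the parametric Sard/weak transversality theorem: pull back $y$ to the smooth fiber $M=f^{-1}(y)$, apply Sard to the projection $M\to\mathscr{P}$, and do the two-line linear-algebra check that a regular value of this projection gives a parameter $p$ for which $Df_p(x)$ is onto. The paper itself gives no proof of this statement; it simply cites \cite{Goresky1988}, \cite{Guillemin1974}, and \cite{HaHV2017}, and the argument you wrote is precisely the one found in those references (notably Guillemin--Pollack). One small remark: rather than routing through the classical Sard theorem and then observing that a measure-zero semialgebraic set has lower dimension, it is cleaner in this setting to invoke the semialgebraic Sard theorem directly (critical values of a $C^1$ semialgebraic map form a semialgebraic set of strictly smaller dimension), which sidesteps any worry about the differentiability order needed in the classical statement when $\dim M>\dim\mathscr{P}$.
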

\begin{proof}
For a proof, we refer the reader to \cite[Theorem~1.3.6]{Goresky1988}, \cite[The Transversality Theorem, page 68]{Guillemin1974} or \cite[Theorem~1.10]{HaHV2017}.
\end{proof}

\section{Tangent cones at infinity}\label{Section3}

In this section, we define and present some properties of tangent cones at infinity of unbounded semi-algebraic sets in $\mathbb R^n.$ 
First of all, let us agree to call a set $C \subset \mathbb{R}^n$ a {\em cone} if whenever $x \in C,$ then $tx \in C$ for all $t \geqslant 0$.

\begin{definition}{\rm 
By the {\em tangent cone at infinity} (known also as the {\em asymptotic cone}) of  a subset $X$ of $\mathbb{R}^n$ we mean the set
\begin{eqnarray*}
C_\infty X := 
\left\{
\begin{array}{llll}v \in \mathbb R^n:& \textrm{there exist sequences } x^k\in X \textrm{ and } t_k\in (0,+\infty) \textrm{ such that }\\
& x^k\to \infty \textrm{ and } t_k x^k\to v \textrm{ as }k\to\infty
\end{array}\right\}.
\end{eqnarray*} }
\end{definition}

The following lemma is a version at infinity of \cite[Lemma~1.2]{Kurdyka1989} (see also \cite[Corollary~2.18]{Fernandes2020}).

\begin{lemma}\label{Lemma32} 
Let $X$ be an unbounded semi-algebraic subset of $\mathbb R^n.$ Then $C_\infty X$ is a nonempty closed semi-algebraic cone of dimension at most $\dim_\infty X,$ where $\dim_\infty X$ is defined in Corollary~\ref{DimInfinity}.
\end{lemma}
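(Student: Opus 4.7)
The plan is to reduce the statement about tangent cones at infinity to Kurdyka's classical statement about tangent cones at a point (\cite[Lemma~1.2]{Kurdyka1989}) by means of the semi-algebraic inversion $\iota \colon \mathbb R^n\setminus\{0\}\to\mathbb R^n\setminus\{0\}$, $\iota(x):=x/\|x\|^2$. This map is a semi-algebraic involution that exchanges ``neighborhoods of infinity'' in $\mathbb R^n$ with punctured neighborhoods of the origin, and it is the natural tool for translating data at infinity of $X$ into bounded data at~$0$.

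I would first dispose of the easy properties. Nonemptiness follows from unboundedness: taking $x^k\in X$ with $\|x^k\|\to\infty$ and $t_k:=1/\|x^k\|$, the sequence $t_kx^k$ lies in the compact set $\mathbb S^{n-1}$, so a subsequence converges to some unit vector in $C_\infty X$. The cone property is immediate by rescaling $t_k$ by a nonnegative factor. Semi-algebraicity follows from Tarski--Seidenberg applied to the first-order formula
\[
v\in C_\infty X \iff \forall\varepsilon>0,\ \forall R>0,\ \exists x\in X,\ \exists t>0:\ \|x\|>R \text{ and } \|tx-v\|<\varepsilon,
\]
and closedness is a standard diagonal argument: given $v^k\in C_\infty X$ with $v^k\to v$, choose $x^k\in X$ and $t_k>0$ with $\|x^k\|>k$ and $\|t_kx^k-v^k\|<1/k$, then $t_kx^k\to v$ with $\|x^k\|\to\infty$.

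The dimension bound is the substantive point. Pick $R>0$ large enough that $\dim(X\setminus\mathbb B^n_R)=\dim_\infty X$, which is possible by Corollary~\ref{DimInfinity}, and set $Y:=\iota(X\setminus\mathbb B^n_R)$. Since $\iota$ is a semi-algebraic homeomorphism on its domain, $Y$ is a bounded semi-algebraic subset of $\mathbb R^n\setminus\{0\}$ with $\dim Y=\dim_\infty X$. The crux is to verify the identification
\[
C_\infty X \;=\; C_0\overline{Y},
\]
where $C_0\overline{Y}$ denotes the standard tangent cone of $\overline{Y}$ at the origin. Indeed, given $x^k\in X\setminus\mathbb B^n_R$ with $\|x^k\|\to\infty$ and $t_kx^k\to v$, put $y^k:=\iota(x^k)\in Y$; then $y^k\to 0$ and the rescaled sequence $(t_k\|x^k\|^2)\, y^k=t_kx^k$ tends to $v$, placing $v$ in the tangent cone of $Y$ at $0$. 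The opposite inclusion is symmetric, applying $\iota$ once more. Once the identification is in hand, Kurdyka's lemma applied to the bounded semi-algebraic set $\overline{Y}$ at the origin immediately yields that $C_0\overline{Y}$ is a closed semi-algebraic cone of dimension at most $\dim\overline{Y}=\dim_\infty X$, which is exactly what is claimed.

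I do not anticipate any deep obstacle; the care required is chiefly in the book-keeping of the scaling parameters under $\iota$ and in checking that passing to the closure $\overline{Y}$ does not enlarge the tangent cone beyond $C_\infty X$. The latter point is handled by the same sort of diagonal argument used for closedness above: any sequence in $\overline{Y}$ tending to $0$ can be approximated by a sequence in $Y$ whose tangent direction is the same, so no spurious directions are introduced.
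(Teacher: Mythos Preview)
Your argument is correct, but it proceeds along a genuinely different line from the paper's. The paper does not invert; instead it works in one extra dimension, setting
\[
A_R:=\{(tx,t)\in\mathbb R^n\times(0,+\infty):\ x\in X\setminus\mathbb B^n_R\},
\]
and shows that $C_\infty X\times\{0\}=\overline{A}_R\cap(\mathbb R^n\times\{0\})\subset\overline{A}_R\setminus A_R$. Closedness and semi-algebraicity of $C_\infty X$ then come simultaneously from this identity, and the dimension bound follows from the elementary frontier inequality $\dim(\overline{A}_R\setminus A_R)<\dim A_R=\dim_\infty X+1$ (Lemma~\ref{DimensionLemma}), with no appeal to Kurdyka--Raby. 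Your inversion approach is arguably more conceptual---it makes transparent that $C_\infty X$ \emph{is} a tangent cone at a point after a change of chart---but it trades self-containedness for a black-box citation, and it forces you to handle closedness, semi-algebraicity, and the passage from $Y$ to $\overline Y$ by separate ad hoc arguments, whereas the paper's construction delivers all of these at once. One small point of care in your write-up: Kurdyka's bound is by the \emph{local} dimension $\dim_0\overline Y$, not $\dim\overline Y$; you should note explicitly that $\dim_0\overline Y\le\dim\overline Y=\dim Y=\dim_\infty X$, which is immediate but worth stating.
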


\begin{proof}
Since the set $X$ is unbounded, there exists a sequence $x^k \in X$ with $x^k\to \infty.$ Then for all sufficiently large $k,$ we have $t_k := \|x^k\|^{-2} \in (0, +\infty).$ Moreover,  $t_k x^k \to 0 \textrm{ as }k\to\infty.$ By definition, $0 \in C_\infty X$ and so $C_\infty X$ is nonempty.

Let $v \in C_\infty X.$ By definition, there exist sequences $x^k \in X$ and $t_k \in (0, +\infty)$ such that 
$$x^k\to \infty \quad \textrm{ and } \quad  t_k x^k\to v \textrm{ as }k\to\infty.$$
Take any $\lambda \in (0, +\infty).$ Observe that $\lambda t_k \in (0, +\infty)$ and $(\lambda t_k) x^k \to \lambda v$ as $k \to \infty.$ By definition, then $\lambda v  \in C_\infty X.$ Hence $C_\infty X$ is a cone.

For each $R > 0,$ define
$$A_R :=\{(tx, t) \in \mathbb{R}^n \times (0,+\infty): \ x \in X \setminus \mathbb{B}^n_R\}.$$
Clearly, $A_R$ is a nonempty semi-algebraic set.
Moreover 
\begin{equation}\label{sub}C_\infty X \times \{0\} \subset \overline{A}_R \cap (\mathbb{R}^n \times \{0\}) \subset \overline{A}_R \setminus A_R.\end{equation} 
Let us prove that 
\begin{equation}\label{sup}C_\infty X \times \{0\} \supset \overline{A}_R \cap (\mathbb{R}^n \times \{0\}).\end{equation}
For this, let $(v,0)\in \overline{A}_R \cap (\mathbb{R}^n \times \{0\}).$ 
If $v=0,$ then clearly $(v,0)\in C_\infty X \times \{0\}.$
So assume that $v\ne 0.$
By definition, there are sequences $x^k\in X \setminus \mathbb{B}^n_R$ and $t_k\in (0,+\infty)$ such that 
$$t_k\to 0\ \text{ and }\ t_k x^k\to v.$$
This, together with the fact that $v\ne 0$, implies that $x^k\to\infty.$ 
Thus, by definition, $v\in C_\infty X $ and so $(v,0)\in C_\infty X \times \{0\}.$
Consequently,~\eqref{sup} holds.
From~\eqref{sub} and~\eqref{sup}, we have 
$$C_\infty X \times \{0\} = \overline{A}_R \cap (\mathbb{R}^n \times \{0\}).$$
As $C_\infty X \times \{0\}$ is the intersections of two closed semi-algebraic sets, it follows that $C_\infty X \times \{0\}$ and so $C_\infty X$ are closed semi-algebraic sets.

Finally, observe that $A_R$ is homeomorphic to $(X \setminus \mathbb{B}^n_R)\times (0,+\infty).$ Hence, for all $R$ large enough,
$$\dim A_R = \dim_\infty X + 1,$$
which, combined with Lemma~\ref{DimensionLemma} and~\eqref{sub}, yields
$$\dim C_\infty X  \leqslant \dim (\overline{A}_R \setminus A_R) < \dim A_R = \dim_\infty X + 1.$$
This completes the proof of the lemma.
\end{proof}

\section{Proof of the main theorem}\label{Section5}

The goal of this section is to give a proof of Main Theorem. 
First of all, we provide a sufficient condition for the restriction of a linear mapping on a closed semi-algebraic set to be closed.

\begin{lemma}\label{Theorem41} 
Let $X\subset\mathbb R^n$ be a closed semi-algebraic set and let $F \colon \mathbb{R}^n \to \mathbb{R}^m$ be a linear mapping. Set $ D := C_\infty X\cap\mathbb S^{n-1}$ and assume that $\ker F\cap D=\emptyset$. Then $F(X)$ is closed. 
\end{lemma}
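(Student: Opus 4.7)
The plan is to argue by contradiction, taking a sequence in $F(X)$ that converges to some $y \in \mathbb{R}^m$ and showing that $y$ must lie in $F(X)$. So I would pick $y^k = F(x^k)$ with $x^k \in X$ and $y^k \to y$, and split into two cases depending on whether the sequence $(x^k)$ is bounded or not.

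In the bounded case the argument is immediate: by passing to a subsequence, $x^k \to x$ for some $x \in \mathbb{R}^n$, and since $X$ is closed we have $x \in X$; continuity of $F$ then gives $F(x) = y$, so $y \in F(X)$. This case does not use the hypothesis on $\ker F \cap D$.

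The interesting case is when $(x^k)$ is unbounded. Here I would pass to a subsequence along which $\|x^k\| \to \infty$ and consider the normalized directions $v^k := x^k/\|x^k\| \in \mathbb{S}^{n-1}$. By compactness of the sphere, after extracting once more, $v^k \to v$ for some $v \in \mathbb{S}^{n-1}$. Setting $t_k := 1/\|x^k\| \in (0,+\infty)$, one has $t_k \to 0$ and $t_k x^k = v^k \to v$, while $x^k \to \infty$; this matches exactly the definition of the tangent cone at infinity, so $v \in C_\infty X$. Since $\|v\| = 1$, we conclude $v \in D$.

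The final step is to exploit the convergence $y^k \to y$: applying $F$ to the identity $v^k = x^k/\|x^k\|$ gives
\[
F(v^k) \;=\; \frac{F(x^k)}{\|x^k\|} \;=\; \frac{y^k}{\|x^k\|} \;\longrightarrow\; 0,
\]
because the numerator is bounded and the denominator tends to $\infty$. By continuity (linearity) of $F$, $F(v) = 0$, i.e.\ $v \in \ker F$. But then $v \in \ker F \cap D$, contradicting the hypothesis. I do not expect a serious obstacle here — the only thing to be careful about is the case $y = 0$ which is handled uniformly since the conclusion $F(v^k) \to 0$ does not require $y \ne 0$, and the construction of $v \in D$ only uses $\|x^k\| \to \infty$ (not any behavior of $y$).
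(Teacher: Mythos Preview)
Your proposal is correct and follows essentially the same route as the paper: take $y\in\overline{F(X)}$, pick $x^k\in X$ with $F(x^k)\to y$, and rule out $\|x^k\|\to\infty$ by extracting a limiting direction $v\in D$. The only cosmetic difference is in the final contradiction: you show $F(v)=\lim F(x^k)/\|x^k\|=0$ directly, whereas the paper uses compactness of $D$ to get $\min_{x\in D}\|F(x)\|>0$ and then derives $\|y\|=\lim\|x^k\|\cdot\|F(x^k/\|x^k\|)\|=+\infty$; these are equivalent ways of exploiting $\ker F\cap D=\emptyset$.
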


\begin{proof}
Let $y \in \overline{F(X)}.$ Then there is a sequence $x^k\in X$ such that $\displaystyle\lim_{k\to\infty} F(x^k)=y.$ We will show that $x^k$ has a convergent subsequence. For contradiction, assume that $x^k\to\infty$. Passing to a subsequence if necessary, we may suppose that $\displaystyle\lim_{k\to\infty}\frac{x^k}{\|x^k\|} = u \in D.$ Since $D$ is compact and $\ker F \cap D = \emptyset,$ it holds that $\displaystyle\|F(u)\|\geqslant\min_{x\in D}\|F(x)\|>0$ and so
$$\|y\| = \lim_{k\to\infty}\|F(x^k)\|=\lim_{k\to\infty}\|x^k\|\left\|F\left(\frac{x^k}{\|x^k\|}\right)\right\| = +\infty,$$
which is impossible. Therefore the sequence $x^k$ has a cluster point, say $x$. Clearly $x\in X$ and $F(x)=y$, so $y\in F(X)$.
\end{proof}

We are now in a position to prove the main result of the paper.

\begin{proof}[Proof of Main Theorem] 

By Theorem~\ref{TarskiSeidenbergTheorem1},  it is not hard to check that the set 
$$\{F\in L(\mathbb{R}^n, \mathbb{R}^m) \ : \ (F + G)(X) \text{ is closed}\}$$ 
is semi-algebraic so it remains to show that it contains an open dense semi-algebraic subset of $L(\mathbb{R}^n, \mathbb{R}^m).$ 
The proof is divided into two cases. First of all, we consider the case $G\equiv 0$.

\subsubsection*{Case 1: Linear case}

If $n\leqslant m$, then for any $F\in \mathcal{A}$, $F$ is a linear isomorphism from $\mathbb R^n$ onto $F(\mathbb{R}^n)$ and so $F(X)$ is closed. 
Thus from now on, suppose that $n > m.$ 
We consider two sub-cases.

\subsubsection*{Case $1.1$: $d\leqslant m$}\

Set $D := C_\infty X\cap\mathbb S^{n-1}$.
In view of Proposition~\ref{CD1}, let $\mathcal S=\{D_\alpha\}_{\alpha\in I}$ be a semi-algebraic stratification of $D.$
Recall that we identify $L(\mathbb R^n,\mathbb R^m)$ with $\mathbb{R}^{m \times n}$. For each $\alpha \in I$ define the semi-algebraic mapping
$$\Phi_\alpha \colon \mathbb{R}^{m \times n} \times D_\alpha \to \mathbb  R^m, \quad (A, v) \mapsto Av.$$
Write $A=(a_{ij})_{\substack{i=1,\dots,m\\ j=1,\dots,n}}.$ Since $D_\alpha \subset D \subset \mathbb{S}^{n - 1},$ for each $v \in D_\alpha$ there exists an index $i_0,\ 1 \leqslant i_0 \leqslant n,$ such that $v_{i_0} \ne 0,$ and so the Jacobian matrix $D\Phi_\alpha$ of $\Phi_\alpha$ at $(A, v)$ contains the following diagonal matrix
$$\frac{\partial\Phi_\alpha}{\partial(a_{1i_0},\dots,a_{mi_0})} = 
\begin{pmatrix}
v_{i_0} & 0 & \cdots & 0\\
0 & v_{i_0} & \cdots & 0\\
\vdots & \vdots & \ddots & \vdots \\
0 & 0 & \cdots & v_{i_0}
\end{pmatrix},$$
which has rank $m.$ Hence $D\Phi_\alpha$ is of maximal rank $m$ at every point $(A,v) \in \mathbb{R}^{m \times n} \times D_\alpha.$ Consequently, $0$ is a regular value of the mapping $\Phi_\alpha.$ By Theorem~\ref{SardTheorem},
$$\mathscr{U}_\alpha := \{A\in \mathbb{R}^{m \times n} : 
0 \textrm{ is a regular value of the mapping } \Phi_\alpha(A,\cdot) \colon D_\alpha \to \mathbb{R}^m \}$$ 
contains an open dense semi-algebraic set in $\mathbb{R}^{m \times n}.$

On the other hand, we have
$$\dim D_\alpha \leqslant \dim D < \dim C_\infty X \leqslant d \leqslant m,$$
where the third inequality follows from Lemma~\ref{Lemma32}. Hence, for each fixed $A\in \mathbb{R}^{m \times n}$, $0$ is a regular value of the mapping $\Phi_\alpha(A,\cdot) \colon D_\alpha \to \mathbb{R}^m$ if and only if 
$$\{v \in D_\alpha : \ \Phi_\alpha(A, v) = 0\}  = \emptyset.$$
Therefore, we can write
$$\mathscr{U}_\alpha = \{F\in L(\mathbb R^n,\mathbb R^m):\ \ker F\cap D_\alpha=\emptyset\}.$$
Let $$\displaystyle \mathcal{A}_1 := \bigcap_{\alpha\in I}\mathscr{U}_\alpha = \{F\in L(\mathbb R^n,\mathbb R^m):\  \ker F\cap D=\emptyset\}.$$ 
Then $\mathcal{A}_1$ contains an open dense semi-algebraic set in $L(\mathbb R^n,\mathbb R^m).$ 
In view of Lemma~\ref{Theorem41}, $F(X)$ is closed for each $F\in\mathcal {A}_1$.
This end the proof for the case $d\leqslant m.$

\subsubsection*{Case $1.2$: $d> m$}\

The proof for this case is proceeded by induction on $d.$
There are two sub-cases to consider.

\subsubsection*{Case $1.2.1$: $d=n$}\

It is clear that the interior $\Int(X)$ of X is non-empty and open in $\mathbb R^n$.
Set $\partial X := X\setminus\Int(X).$ Then $\partial X$ is a closed semi-algebraic set of dimension less than $d.$ % $d$ $\dim\partial X<d.$
By the inductive assumption, there is an open dense semi-algebraic $\mathcal A_2\subset L(\mathbb R^n,\mathbb R^m)$ such that $F(\partial X)$ is closed for any $F\in\mathcal A_2.$ Let
\begin{equation*}
\mathcal{A} := \{F\in L(\mathbb{R}^n, \mathbb{R}^m)  : \ F\text{ has maximal rank}\},
\end{equation*}
which is an open dense semi-algebraic set in $L(\mathbb{R}^n, \mathbb{R}^m).$  Replace $\mathcal A_2$ by $\mathcal A_2\cap \mathcal A$ if $\mathcal A_2\not\subset \mathcal A.$ Let us prove that $F(X)$ is closed for all $F\in\mathcal A_2$. 
To do this, take arbitrarily $y\in \overline{F(X)},$ we will show that $y\in F(X).$ 
Let $y^k\in F(X)$ be a sequence tending to $y.$ 
By taking a subsequence if necessary, we can assume that either $F^{-1}(y^k)\cap\partial X\ne\emptyset$ for all $k$ or $F^{-1}(y^k)\cap\partial X=\emptyset$ for all $k$.
In the first case, we have $y\in\overline{F(\partial X)}={F(\partial X)}\subset F(X).$
On the other hand, in the second case, we must have $F^{-1}(y^k) \subset X$ for all $k$
(indeed, if the contrary holds, there are $u^k\in F^{-1}(y^k)\cap X$ and $w^k\in F^{-1}(y^k)\setminus X$ for some $k;$
then the segment joining $u^k$ and $w^k$ must contains a point belonging to $F^{-1}(y^k)\cap\partial X,$ which is a contradiction).
Let $x^k$ be the unique point of the intersection $F^{-1}(y^k)\cap (\ker F)^\perp.$
Since $F$ has maximal rank, the restriction $F|_{(\ker F)^\perp}$ is a linear isomorphism from $(\ker F)^\perp$ onto $\mathbb R^m,$ and hence the sequence $x^k$ must converge to a point $x.$ It is clear that $x\in X$ and $F(x)=y.$
Consequently, $F(X)$ is closed.

\subsubsection*{Case $1.2.2$: $d<n$}\

By {\em Case $1.1$}, there is an open dense semi-algebraic set $\mathcal B_1$ in $L(\mathbb R^n,\mathbb R^d)$ such that $F_1(X)$ is closed for all $F_1\in\mathcal B_1$. For $F_1\in\mathcal B_1$, let 
$$\mathcal B_2(F_1):=\{F_2\in L(\mathbb R^d,\mathbb R^m):\ F_2(F_1(X)) \text{ is closed}\}.$$ 
Observe that $\mathcal B_2(F_1)$ contains an open dense semi-algebraic set.
Indeed, this follows from the inductive assumption if $\dim F_1(X)<d$ and from {\em Case $1.2.1$} if $\dim F_1(X)=d$.
Let
$$\mathcal A_3:=\{F\in L(\mathbb R^n,\mathbb R^m):\ \text{ there are } F_1\in \mathcal B_1 \text{ and } F_2\in \mathcal B_2(F_1) \text{ such that }F=F_2\circ F_1\},$$
which is a semi-algebraic set. It is clear that $F(X)$ is closed for all $F\in\mathcal A_3.$
Let us show that $\mathcal A_3$  is dense in $L(\mathbb R^n,\mathbb R^m)$.
For this, pick arbitrarily $F\in L(\mathbb R^n,\mathbb R^m).$
It is not hard to see that there exist $F_1\in L(\mathbb R^n,\mathbb R^d)$ and $F_2\in L(\mathbb R^d,\mathbb R^m)$ such that $F=F_2\circ F_1.$
Since $\mathcal B_1$ is dense in $L(\mathbb R^n,\mathbb R^d)$, there is a sequence $F_1^k\in\mathcal B_1$ such that 
$$\|F_1^k-F_1\|\leqslant \frac{1}{k}.$$
Furthermore, for each $k,$ as $\mathcal B_1(F_1^k)$ is dense in $L(\mathbb R^d,\mathbb R^m)$, there is $F_2^k\in \mathcal B_2(F_1^k)$ such that 
$$\|F_2^k-F_2\|\leqslant \frac{1}{k}.$$
Set $F^k:=F_2^k\circ F_1^k.$ 
We have
$$\begin{array}{lll}\|F^k-F\|&=&\|F_2^k\circ F_1^k-F_2\circ F_1\|\\
&\leqslant& \|F_2^k\circ F_1^k-F_2^k\circ F_1\|+\|F_2^k\circ F_1-F_2\circ F_1\|\\
&\leqslant& \|F_2^k\| \|F_1^k-F_1\|+\|F_2^k-F_2\| \|F_1\|\\
&\leqslant&  \displaystyle \frac{1}{k} \left(\|F_2\| + \frac{1}{k} \right) + \frac{\|F_1\|}{k}.
\end{array}$$
Hence $F^k\to F$ as $k\to\infty.$
Therefore $\mathcal A_3$ is dense in $L(\mathbb R^n,\mathbb R^m).$
By Lemma~\ref{DenseIsOpen}, $\mathcal A_3$ contains an open dense semi-algebraic set in $L(\mathbb R^n,\mathbb R^m).$
This ends the proof for the linear case.

\subsubsection*{Case 2: General case}

Since the set $X$ is closed and the mapping $G$ is continuous, the semi-algebraic set
$$Z := \{(x, y)\in \mathbb{R}^n \times \mathbb{R}^m :\ x \in X,\ y = G(x)\}$$
is closed. Applying {\em Case 1} to $Z,$ we can see that the set
$$\{\mathcal{F} \in L(\mathbb{R}^n \times \mathbb{R}^m, \mathbb{R}^m) :\ \mathcal{F}(Z) \textrm{ is closed} \}$$
contains an open dense semi-algebraic set, say $\mathscr{U} \subset L(\mathbb{R}^n \times \mathbb{R}^m, \mathbb{R}^m).$ 
Consider the mapping
$$\Phi\colon L(\mathbb{R}^n \times \mathbb{R}^m, \mathbb{R}^m)\to L(\mathbb{R}^n, \mathbb{R}^m) \times L(\mathbb{R}^m, \mathbb{R}^m),\ \mathcal F \mapsto (F_1,F_2),$$
where $F_1 \in  L(\mathbb{R}^n, \mathbb{R}^m)$ and $F_2 \in L(\mathbb{R}^m, \mathbb{R}^m)$ are linear mappings defined as follows
$$F_1(x) :=\mathcal F(x,0) \ \textrm{ and } \ F_2(y):=\mathcal F(0,y) \ \textrm{ for all } \ x \in \mathbb{R}^n,\ y \in \mathbb{R}^m.$$
A simple calculation shows that $\Phi$ is a linear isomorphism. Consequently, $\Phi(\mathscr{U})$ is an open dense semi-algebraic set in $L(\mathbb{R}^n, \mathbb{R}^m) \times L(\mathbb{R}^m, \mathbb{R}^m)$. 
In view of Corollary~\ref{Everywhere}, there exists a linear isomorphism $F_2^* \in L(\mathbb{R}^m, \mathbb{R}^m)$ such that  the set
$$\{{F} \in L(\mathbb{R}^n, \mathbb{R}^m) :\ (F, F_2^*) \in \Phi(\mathscr{U}) \}$$
contains an open dense semi-algebraic set in $L(\mathbb{R}^n, \mathbb{R}^m).$ Or, equivalently, the set
$$\{(F_2^*)^{-1} \circ {F} \in L(\mathbb{R}^n, \mathbb{R}^m) :\ (F, F_2^*) \in \Phi(\mathscr{U}) \}$$
contains an open dense semi-algebraic set in $L(\mathbb{R}^n, \mathbb{R}^m).$

Take arbitrarily $(F, F_2^*) \in \Phi(\mathscr{U}).$ Then the image $[\Phi^{-1}(F, F_2^*)](Z)$ is closed, and so is the set $(F_2^*)^{-1} \circ [\Phi^{-1}(F, F_2^*)](Z).$ Observe that
\begin{eqnarray*}
(F_2^*)^{-1} \circ [\Phi^{-1}(F, F_2^*)] (Z)
&=&  \left \{(F_2^*)^{-1} (F(x) + F_2^*(y)) :\ x \in X,\ y = G(x) \right\} \\
&=&  \left \{[(F_2^*)^{-1} \circ F](x) + y :\ x \in X,\ y = G(x) \right\} \\
&=&  \left \{[(F_2^*)^{-1} \circ F](x) + G(x) :\ x \in X \right\}.
\end{eqnarray*}
Hence the last set is also closed. The theorem is proved.
\end{proof}

\subsection*{Acknowledgments}
%The authors wish to thank the referee for her/his careful reading and constructive comments on the manuscript. 
The first and the third authors were partially supported by the Vietnam Academy of Science and Technology under Grant Number \DD LTE00.01/21-22.
The second author was partially supported by the grant of Narodowe Centrum Nauki number 2019/33/B/ST1/00755. 

%\bibliographystyle{abbrv}
%\bibliography{E:/Submission/DST-BibPureMath1,E:/Submission/DST-BibAppMath1}

\end{document}